\documentclass[11pt, english]{article}

\usepackage[english]{babel}
\usepackage[thms]{packages_old}
\usepackage[hidelinks]{hyperref}
\usepackage[capitalise]{cleveref}
\usepackage{todonotes}
\usepackage{dsfont}
\usepackage{orcidlink}

\usepackage[style=trad-alpha,backend=biber,url=false, isbn=false]{biblatex}
\addbibresource{tasep_weak_reservoirs.bib}

\newcommand{\dy}{\;\mathrm{d}}

\title{Weak reservoirs are superexponentially irrelevant for misanthrope processes}
\author{Julian Kern\orcidlink{0000-0002-8231-0736}}

\begin{document}

\maketitle
\begin{abstract}
We provide a short proof for the exponential equivalence between misanthrope processes in contact with weak reservoirs and those with impermeable boundaries.
As a consequence, we can derive both the hydrodynamic limit and the large deviations of the totally asymmetric simple exclusion process (TASEP) in contact with weak reservoirs.
This extends a recent result which proved the hydrodynamic behaviour of a vanishing viscocity approximation of the TASEP in contact with weak reservoirs.
Further applications to a class of asymmetric exclusion processes with long jumps are discussed.
\end{abstract}

\section{Introduction}

When new particle systems are introduced, they are often first considered on the whole lattice $\mathbb{Z}^d$ or large tori $(\mathbb{Z}/N\mathbb{Z})^d$.
When restricting the system to a bounded box and adding boundary interactions, additional difficulties come into play, both from the microscopic and the macroscopic level.
Although progress has been made, many non-reversible asymmetric boundary-driven particle systems still remain out of reach of current methods.
This includes even some of the simplest models such as the Totally Asymmetric Simple Exclusion Process (TASEP), for which the boundary-driven dynamics have been unsolved so far, see e.g.~\cite{X22a,X22b}.

The aim of this paper is to provide a new tool to analyze specifically processes in contact with \emph{weak reservoirs}, that is, particle systems in which the boundary dynamics act on a slower time scale than the bulk dynamics.
More precisely, we provide a short proof of the exponential equivalence between processes in contact with weak reservoirs, respectively with impermeable boundaries, for a large class of models.
The result is built on a comparison argument that relies heavily on the coupling argument introduced in \cite{Coc85} for misanthrope processes.
In the special case of one-dimensional totally asymmetric processes, this comparison allows us to deduce the limiting behaviour from the corresponding particle system on the whole line.
As an immediate consequence, we can extend the result from \cite{X22b} to the original TASEP (without the need of vanishing viscocity dynamics) and derive the hydrodynamic behaviour of related totally asymmetric models.

The remainder of the paper is divided into three sections: the presentation of the main result, its applications, and the proof. A generalization is discussed in \Cref{asec:misanthrope}. 
\Cref{asec:extension_proof} contains a technical extension of the hydrodynamic limit proved in \cite{SS18}, necessary for the application of the main result to a totally asymmetric exclusion process with long jumps.

\paragraph*{Acknowledgement}

This research has been funded by the Deutsche Forschungsgemeinschaft (DFG, German Research Foundation) under Germany’s Excellence Strategy – The Berlin Mathematics Research Center MATH+ (EXC-2046/1, project ID: 390685689) and through grant CRC 1114 Scaling Cascades in Complex Systems, Project
Number 235221301, Project C02 Interface dynamics: Bridging stochastic and hydrodynamic descriptions.
I would like to thank Robert Patterson for valuable discussions.

\section{Notation and main result}

In this section, we will restrict ourselves to the special case of \emph{exclusion} processes. For the more general case, see \Cref{asec:misanthrope}.

Write $\Lambda_N:=\{1,\dots, N-1\}$ for the bulk and $\Omega_N := \{0,1\}^{\Lambda_N}$ for the space of configurations. For a configuration $\eta\in\Omega_N$, the occupation variable $\eta(x)$ records whether there is a particle at site $x$ or not. We denote by $\eta^{x,y}$ the exchange of the sites $x$ and $y$, and by $\eta^x$ the flip of the site $x$ in the sense that
\[
\eta^{x,y}(z) := \begin{cases}
\eta(y) & \text{ if }z = x\\
\eta(x) & \text{ if }z = y\\
\eta(z) & \text{ otherwise }
\end{cases}\qquad\text{ and }\qquad \eta^x(z) := \begin{cases}
1 - \eta(x) & \text{ if }z = x\\
\eta(z) & \text{ otherwise }
\end{cases}.
\]

Next, define the generators
\[
\begin{array}{rl}
\mathcal{L}_{\text{bulk}}^N f(\eta) &:=\displaystyle \sum_{x,y\in\Lambda_N} p(x,y)\cdot \eta(x)\big(1-\eta(y)\big)\cdot\Big( f(\eta^{x,y}) - f(\eta)\Big),\\
\mathcal{L}_{\text{influx}}^N f(\eta) &:=\displaystyle \sum_{x\not\in\Lambda_N}\sum_{y\in\Lambda_N} p(x,y) \cdot \alpha(x)\big(1-\eta(y)\big)\cdot\Big( f(\eta^y) - f(\eta)\Big),\\
\mathcal{L}_{\text{outflux}}^N f(\eta) &:=\displaystyle \sum_{x\in\Lambda_N}\sum_{y\not\in\Lambda_N} p(x,y) \cdot \eta(x)\beta(y)\cdot \Big( f(\eta^x) - f(\eta)\Big),
\end{array}
\]
where $p:\mathbb{Z}^2 \rightarrow [0,+\infty)$ is a jump kernel and $\alpha,\beta:\mathbb{Z}\rightarrow [0,+\infty)$ are bounded. Here and in the following, we will use the shortcut $x\not\in\Lambda_N$ to mean $x\in \mathbb{Z}\setminus \Lambda_N$.

For two sequences $(a_N)_{N\in\mathbb{N}}, (b_N)_{N\in\mathbb{N}}$ of positive real numbers, we will write 
\[
a_N = \begin{cases}
o(b_N) & \text{ if }\lim_{N\to\infty}\frac{a_N}{b_N} = 0\\
\mathcal{O}(b_N) & \text{ if } \limsup_{N\to\infty} \frac{a_N}{b_N} < +\infty\\
\Theta(b_N) & \text{ if } a_N = \mathcal{O}(b_N)\text{ and } b_N = \mathcal{O}(a_N)
\end{cases}.
\]

\begin{defin}
\begin{enumerate}
	\item An \emph{exclusion process in contact with impermeable boundaries} is an $\Omega_N$-valued Markov process with generator $\mathcal{L}_{\textnormal{bulk}}^N$.
	\item An \emph{exclusion process in contact with weak reservoirs} is an $\Omega_N$-valued Markov process with generator $\mathcal{L}^N := \mathcal{L}_{\textnormal{bulk}}^N + \theta(N)\left(\mathcal{L}_{\textnormal{influx}}^N + \mathcal{L}_{\textnormal{outflux}}^N\right)$ for some $\theta(N) = o(1)$.
\end{enumerate}
\end{defin}

In the following, we will identify an $\Omega_N$-valued process with the corresponding measure-valued process via the map
\begin{equation}\label{eq:empirical_map}
\pi^N:\Omega_N \rightarrow \mathcal{M}_F([0,1]),\qquad \eta\mapsto\dfrac{1}{N-1}\sum_{x\in\Lambda_N}\eta(x)\delta_{x/N},
\end{equation}
where $\mathcal{M}_F([0,1])$ denotes the space of finite measures on $[0,1]$.
We endow $\mathcal{M}_F([0,1])$ with the topology of weak convergence, induced by the Lévy-Prokhorov metric
\[
d_{LP}(\mu, \nu) := \inf\{ \epsilon \textgreater 0\;:\; \forall A\subseteq[0,1]\text{ Borel }, \mu(A) \leq \nu(A_\epsilon) + \epsilon\},
\]
where $A_\epsilon := \{ y\in [0,1]\;:\; \inf_{x\in A} \vert y-x\vert \leq \epsilon\}$.
Next, consider the space of measure-valued càdlàg processes $\mathbb{D}_{[0,T]}\big(\mathcal{M}_F([0,1])\big)$ with the Skorokhod topology induced by the weak topology on $\mathcal{M}_F([0,1])$.
In the following, we will write $d_{J_1}$ for the corresponding complete metric inducing this topology.

Recall that the total variation norm of a finite measure is given by
\[
\Vert \mu\Vert_{TV} := \sup_{A\subseteq [0,1]\text{ Borel}} \vert \mu(A)\vert.
\]
From the definition of the Lévy-Prokhorov metric, it follows that
\[
d_{LP}(\mu, \nu) \leq \Vert \mu - \nu\Vert_{TV},
\]
so that we may strengthen the Skorokhod topology by replacing it with the uniform topology in total variation induced by
\[
\Delta(\pi_1,\pi_2) := \sup_{t\in [0,T]} \Vert \pi_1(t) - \pi_2(t)\Vert_{TV} \geq d_{J_1}(\pi_1,\pi_2)
\]

\begin{defin}[Exponential equivalence, cf.~{\cite[Definition 4.2.10]{DZ98}}]
We say that two sequences of probability measures $\mathbb{Q}_1^N$ and $\mathbb{Q}_2^N$ on $\mathbb{D}_{[0,T]}\big(\mathcal{M}_F([0,1])\big)$ are \emph{exponentially equivalent} if there exist
\begin{enumerate}[label=\roman*)]
	\item a sequence of (abstract) probability spaces $(\mathcal{X}^N, \mathcal{F}^N, \mathcal{Q}^N)$ and
	\item a sequence of random variables $\pi_1^N,\pi_2^N:\mathcal{X}^N \rightarrow \mathbb{D}_{[0,T]}\big(\mathcal{M}_F([0,1])\big)$ with respective laws $\mathbb{Q}_1^N$ and $\mathbb{Q}_2^N$
\end{enumerate}
such that the sets $\{d_{J_1}(\pi_1^N, \pi_2^N) > \epsilon\}$ are $\mathcal{F}^N$-measurable and
\[
\limsup_N \dfrac{1}{N}\ln\mathcal{Q}^N\left\{ d_{J_1}(\pi_1^N, \pi_2^N) > \epsilon\right\} = -\infty
\]
for all $\epsilon > 0$.
\end{defin}

In order to state the main result, we introduce the notion of a time change. 
We say that we \emph{speed up} a process $(\pi_t)_{t\geq 0}$ by a factor $\kappa$ if we consider the process $(\pi_{\kappa t})_{t\geq 0}$ instead.
In the case of a Markov process with generator $\mathcal{L}$, this is equivalent to considering the Markov process with generator $\kappa\mathcal{L}$.

\begin{theo}\label{theo:expon_equiv}
If both processes are sped up by some factor $\kappa(N)$, then the exclusion process in contact with impermeable boundaries is exponentially equivalent to the exclusion process in contact with weak boundaries provided
\[
\kappa(N)\theta(N)\sum_{x\in\Lambda_N,y\not\in\Lambda_N} \Big( p(x,y) + p(y,x)\Big) = o(N).
\]
\end{theo}

Under the assumptions of the theorem, the two processes are indistinguishable up to the level of large deviations.
In particular, their hydrodynamic behaviour coincides and if one satisfies a law of large numbers, the other satisfies the same.

\section{Applications to totally asymmetric systems}\label{sec:applications}

In this section, we discuss applications of the exponential equivalence to prove the hydrodynamic behaviour and the large deviations for asymmetric exclusion processes.

\subsection{The TASEP in contact with weak reservoirs}

In the recent paper \cite{X22b}, one of the models is the nearest-neighbour TASEP in contact with weak reservoirs given by the above through the choice $p(x,y) = \mathds{1}_{y=x+1}$, $\alpha$ and $\beta$ some (possibly time-dependent, but locally) bounded rates, and $\theta(N) = N^m$ for some $m < 0$. 
Due to the asymmetry, the process evolves on the timescale $\kappa(N) = N$.
This means in particular that
\[
\kappa(N)\theta(N)\sum_{x\in\Lambda_N,y\not\in\Lambda_N} \Big( p(x,y) + p(y,x)\Big) = \mathcal{O}\big(N\theta(N)\big) = o(N),
\]
verifying the condition of \Cref{theo:expon_equiv}. To include time-dependent rates, we make use of the more general result discussed at the end of \Cref{asec:misanthrope}.

\Cref{theo:expon_equiv} then ensures that the process is exponentially equivalent to the (nearest-neighbour) TASEP with impermeable boundaries.
We can extend the latter to the left by zeros and to the right by ones without changing the dynamics, transforming it into the TASEP on $\mathbb{Z}$.
Using \cite{Sep98} (see also \cite[Theorem 2.1]{Var04}), we obtain the hydrodynamic behaviour.
The large deviations are considered in \cite{Var04} and completed in \cite[Main Theorem]{QT22}.
Through the exponential equivalence, the LDP translates directly to the TASEP in contact with weak reservoirs.

In contrast to \cite{X22b}, this result does not necessitate the addition of a vanishing viscocity to the model in order to prove the hydrodynamic behaviour.

\subsection{The TALJEP in contact with weak reservoirs}

Similarly, we may consider the jump kernel
\[
p(x,y) = \dfrac{\mathds{1}_{y > x}}{\vert y - x\vert^{1+\gamma}}
\]
for some $\gamma > 0$.
As a process on $\mathbb{Z}$, this is a special case of the model from \cite{SS18}.
The Totally Asymmetric Long Jump Exclusion Process (TALJEP) in contact with reservoirs with $\gamma \in (0,1)$ is also considered as a special case in \cite{KGX24}, where the hydrodynamic behaviour is investigated through other methods.

The TALJEP on the whole of $\mathbb{Z}$ undergoes a phase transition at $\gamma = 1$: for $\gamma\in (0,1)$, the mean jump size is infinite and the long range effects remain visible at the macroscopic level; for $\gamma > 1$, the mean becomes finite and the model behaves like the TASEP.
The phase transition can also be read off from the correct time scales given by
\[
\kappa(N) = \begin{cases}
N^\gamma & \text{ if }\gamma < 1\\
\frac{N}{\ln N} & \text{ if } \gamma = 1\\
N & \text{ if } \gamma > 1
\end{cases},
\]
see \cite{SS18} for details.

In the case of weak boundaries, we may check that for every $\gamma > 0$ and the above choices of $\kappa(N)$, one has
\[
\kappa(N)\theta(N)\sum_{x\in\Lambda_N, y\not\in\Lambda_N} \Big( p(x,y) + p(y,x)\Big) = o(N),
\]
so that \Cref{theo:expon_equiv} is applicable.
As in the case of the TASEP, we can extend the TALJEP with impermeable boundaries to the left with zeros and the right with ones to recover the TALJEP on $\mathbb{Z}$.
However, this does not yet allow us to deduce the hydrodynamical behaviour of the TALJEP with weak boundaries.
Indeed, the proof in \cite{SS18} relies on the assumption that the initial profile has the same asymptotic density $\rho^*\in(0,1)$ in both directions.
This is violated here in multiple ways, since the asymptotic density to the left is 0 and the asymptotic density to the right is 1.

In the fractional regime $\gamma\in (0,1)$, the proof from \cite{SS18} does actually work for general initial conditions; for a different approach which extends the result to measurable initial conditions, see \cite{KGX24}.
In the hyperbolic regime $\gamma > 1$, we will control the propagation of mass ´to extend the result from \cite{SS18} to suitable initial profiles.
This allows us to deduce the hydrodynamic behaviour of the TALJEP in contact with weak reservoirs at least when a) the initial profile $\rho_0$ is continuous on $[0,1]$ and satisfies $\rho_0(0) = 0$ and $\rho_0(1) = 1$, and b) the initial configuration is distributed as a product measure with profile $\rho_0$, as it is given by the next result.

\begin{lem}\label{lem:compactly_supported}
Let $\rho_0\in C(\mathbb{R};[0,1])$ be a continuous profile satisfying $\rho_0(x) = 0$ (resp. {$\rho_0(x) = 1$}) for $x$ small (resp.~large) enough, and let $\mu_0^N$ be the product measure on $\mathbb{Z}$ with marginals $\mu_0^N\big(\eta(x) = 1\big) = \rho_0(x/N)$.
Then, the TALJEP on $\mathbb{Z}$ for $\gamma > 1$ with initial configuration distributed as $\mu_0^N$ satisfies a law of large numbers with hydrodynamic limit given by the solution $\rho$ to Burgers' equation \cite[Equation (3.2)]{SS18} with initial value $\rho_0$.
\end{lem}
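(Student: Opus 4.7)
The plan is to approximate the profile $\rho_0$ by a family of profiles $(\rho_0^L)_{L>0}$ having a common nontrivial asymptotic density at both infinities, apply the extension of \cite{SS18} established in \Cref{asec:extension_proof} to each approximation, and transfer the hydrodynamic limit back to $\rho_0$ via a microscopic comparison that exploits the finite first moment of $p$ when $\gamma>1$.

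Concretely, fix $L$ large enough that $\rho_0\equiv 0$ on $(-\infty,-L]$ and $\rho_0\equiv 1$ on $[L,\infty)$, and define $\rho_0^L\in C(\mathbb{R};[0,1])$ to coincide with $\rho_0$ on $[-L,L]$ and to equal some fixed $\rho^*\in(0,1)$ outside $[-L-1,L+1]$, with a continuous interpolation in between. Let $\mu_0^{N,L}$ be the product measure on $\mathbb{Z}$ with marginals $\rho_0^L(x/N)$. Since $\rho_0^L$ has a common asymptotic density $\rho^*\in(0,1)$, the extension of \cite{SS18} in \Cref{asec:extension_proof} yields that the TALJEP on $\mathbb{Z}$ started from $\mu_0^{N,L}$ satisfies a law of large numbers with hydrodynamic limit $\rho^L$, the Kru\v{z}kov entropy solution of Burgers' equation with initial datum $\rho_0^L$. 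Because $\rho_0^L\to\rho_0$ in $L^1_{\mathrm{loc}}(\mathbb{R})$ by construction, the $L^1$-contraction principle implies $\rho^L(t,\cdot)\to\rho(t,\cdot)$ in $L^1_{\mathrm{loc}}(\mathbb{R})$ uniformly on $[0,T]$, where $\rho$ is the entropy solution with initial datum $\rho_0$.

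The core step is to compare the microscopic processes $(\eta_t^N)$ and $(\xi_t^N)$ started respectively from $\mu_0^N$ and $\mu_0^{N,L}$. Since both product measures have identical marginals on $[-LN,LN]\cap\mathbb{Z}$, I couple their initial configurations to agree pointwise there and then evolve them under the basic coupling of the TALJEP on $\mathbb{Z}$. For any $G\in C_c((-K,K))$ with $K<L$, the difference of the two empirical measures tested against $G$ is bounded by $N^{-1}\Vert G\Vert_\infty$ times the number of sites in $[-KN,KN]$ on which $\eta_t^N$ and $\xi_t^N$ disagree. A discrepancy at such a site must be traceable along a chain of coupled jumps to an initial discrepancy in $\{|y|>LN\}$. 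The direct flux across the outer boundary obeys
\[
\sum_{|y|>LN,\ |z|\leq KN} p(y,z) = \mathcal{O}\bigl((L-K)^{1-\gamma}N^{1-\gamma}\bigr),
\]
so that the expected number of direct leak events during the macroscopic time window $[0,TN]$ is $\mathcal{O}(N^{2-\gamma})$, which is $o(N)$ precisely because $\gamma>1$. The indirect contribution—discrepancies first dropped into the buffer $[-LN,LN]\setminus[-KN,KN]$ and then propagating inward—is handled by noting that, under the attractivity of the process and the finiteness of the first moment of $p$, the effective speed at which such buffer discrepancies can invade the inner window is bounded; taking $L-K$ sufficiently large compared to $T$ keeps this contribution $o(N)$ as well. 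Combining the three steps via the triangle inequality yields the claimed law of large numbers.

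The main obstacle is the last step. Since the two profiles differ on infinitely many sites, the coupling carries infinitely many initial discrepancies, so one cannot track individual second-class particles in the classical manner, and the polynomial tails of $p$ rule out any Gaussian concentration. The argument must therefore be phrased in terms of expected flux combined with attractive bounds. The exponent $\gamma>1$ enters critically at the point where the leak rate $\sum_{z>cN}z^{-\gamma}$ must decay faster than $N^{-1}$; at $\gamma=1$ the same estimate yields only a $\log N$ factor, consistent with the change of hydrodynamic time scale at the hyperbolic threshold.
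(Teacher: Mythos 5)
Your overall architecture (replace $\rho_0$ by profiles with constant density at infinity, to which the result of \cite{SS18} applies directly, couple microscopically, and use finite speed of propagation for Burgers' equation on the macroscopic side) parallels the paper's proof, and your ``direct flux'' computation is correct. Two small remarks before the main one: invoking ``the extension in \Cref{asec:extension_proof}'' for the profiles $\rho_0^L$ is circular --- that appendix \emph{is} the proof of this lemma; you should invoke \cite{SS18} itself, whose hypothesis $\rho_0^L\equiv\rho^*$ near infinity is exactly what your construction arranges. Also, plain $L^1$-contraction does not apply since $\rho_0^L-\rho_0$ is not integrable (the profiles differ by a constant on two half-lines); you need the localized Kru\v{z}kov estimate, i.e.\ finite speed of propagation, which in fact gives $\rho^L(t,\cdot)=\rho(t,\cdot)$ on the window for $L$ large.

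The genuine gap is the ``indirect contribution.'' The flux of discrepancies across the point $-LN$ has rate $\sum_{d\geq 1} d\cdot d^{-1-\gamma}=\sum_d d^{-\gamma}=\mathcal{O}(1)$ per unit time, so over the accelerated time horizon $TN$ the number of discrepancies entering the buffer is $\Theta(N)$, not $o(N)$. A ``bounded effective speed'' statement then only tells you that each of these $\Theta(N)$ discrepancies reaches the inner window with a probability $\delta(L-K,T)$ that is small but \emph{independent of $N$}; this yields an expected invasion of order $\delta\cdot N$, which is not $o(N)$ for fixed $L$, contrary to your claim. To close the argument you need a quantitative tail bound on the inward displacement of a discrepancy: either a moment estimate (as in the paper) combined with the iterated limit $N\to\infty$ followed by $L\to\infty$ --- which your setup, with $L$ fixed at the outset, does not accommodate --- or the sharp one-big-jump estimate giving per-discrepancy probability $\mathcal{O}(N^{1-\gamma})$ once $L-K$ exceeds the ballistic range, whence $\mathcal{O}(N^{2-\gamma})=o(N)$; neither is carried out. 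Note also that the domination of discrepancy motion by a free $p$-walk is itself something to prove: discrepancies coming from the right block (where $\eta$ has density $1$ and $\xi^L$ density $\rho^*$) invade leftwards not by their own jumps but through overtaking events in the coupling. The paper avoids all of this bookkeeping by choosing \emph{monotone} approximations (equal to $\tfrac12$ outside $[-2m,2\ell]$, monotone in $m$ and $\ell$), so that attractivity gives pointwise ordered configurations and empirical-measure distances become mass differences; the intermediate process started from the profile vanishing on the negative half-line has \emph{conserved} mass on $\{x\geq 1\}$ by total asymmetry, and everything reduces to one estimate: the expected number of independent $p$-walkers starting left of $-mN$ that travel distance $mN$ within time $TN$, bounded via Jensen and a $(1+\gamma')$-moment, $\gamma'=\tfrac{\gamma-1}{2}$, by $\mathcal{O}(Nm^{-\gamma'})$, with $m\to\infty$ taken after $N\to\infty$. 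That estimate is precisely what is missing from your write-up.
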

\begin{proof}
See \Cref{asec:extension_proof} after reading Section 4.
\end{proof}

\section{Proof of \Cref{theo:expon_equiv}}\label{sec:proof}

For every $N\in\mathbb{N}$, let $\mu_N$ be a probability measure on $\Omega_N$.
For two configurations ${\eta_1,\eta_2\in\Omega_N}$, we will say that $\eta_1 \leq \eta_2$ if the configuration are ordered pointwisely, i.e.~if $\eta_1(x) \leq \eta_2(x)$ for all $x\in \Lambda_N$.
At the end of this section, we will construct a coupling of the three processes $\eta^N$, $\tilde{\eta}^N$ and $\hat{\eta}^N$ with the same initial distributions and respective generators
\[
\kappa(N)\mathcal{L}_{\text{bulk}}^N,\quad \kappa(N)\mathcal{L}_{\textnormal{bulk}}^N + \theta(N)\kappa(N)\mathcal{L}_{\text{influx}}^N\quad\text{and}\quad \kappa(N)\mathcal{L}^N
\]
that additionally satisfy
\[
\eta^N_t \leq \tilde{\eta}^N_t\qquad\text{ and }\qquad \hat{\eta}^N_t \leq \tilde{\eta}^N_t.
\]
Note that $\eta^N$ corresponds to the process with impermeable boundaries and $\hat{\eta}^N$ is the process with weak boundaries.


Suppose for a moment that we are given such a coupling. 
Write $\pi^N_t := \pi^N(\eta^N_t)$, $\tilde{\pi}^N_t := \pi^N(\tilde{\eta}^N_t)$ and $\hat{\pi}^N_t := \pi^N(\hat{\eta}^N_t)$ for the corresponding measure-valued processes.
Since they are all atomic, the ordering implies
\[
N\Vert \tilde{\pi}^N_t - \pi^N_t\Vert_{TV} = \vert \tilde{\eta}^N_t\vert - \vert \eta^N_t\vert\quad\text{ and }\quad N\Vert \tilde{\pi}^N_t - \hat{\pi}^N_t\Vert_{TV} = \vert \tilde{\eta}^N_t \vert - \vert \hat{\eta}^N_t\vert,
\]
where we write $\vert \eta\vert := \sum_{x\in\Lambda_N} \eta(x)$ for the total mass of a configuration $\eta\in\Omega_N$.
Furthermore, since the difference in mass can only come from the boundary interactions, we may conclude that both $t\mapsto \vert \tilde{\eta}^N_t\vert - \vert \eta_t^N\vert$ and $t\mapsto \vert \tilde{\eta}^N_t \vert - \vert \hat{\eta}_t^N\vert$ are non-decreasing in time, see also the coupling at the end of this section.

Using the comparison between the Skorokhod metric w.r.t.~weak convergence and the uniform metric w.r.t.~total variation, we have
\[
\left\{ d_{J_1}(\tilde{\pi}^N, {\pi}^N) > \epsilon\right\} \subseteq \left\{ \sup_{t\in [0,T]} \vert \tilde{\eta}_t^N \vert - \vert \eta_t^N\vert > \epsilon N\right\} = \left\{ \vert \tilde{\eta}_T^N\vert - \vert \eta_T^N\vert > \epsilon N\right\}
\]
and similarly for $\{d_{J_1}(\tilde{\pi}^N, \hat{\pi}^N) > \epsilon\}$.
Hence, the proof reduces to the following lemma.

\begin{lem}[No loss of mass]\label{lem:loss_mass}
The events $\{ \vert \tilde{\eta}^N_T \vert - \vert \eta^N_T\vert \geq \epsilon N\}$ and $\{\vert \tilde{\eta}_T^N \vert - \vert \hat{\eta}^N_T\vert \geq \epsilon N\}$ are superexponentially unlikely for every $\epsilon > 0$.
\end{lem}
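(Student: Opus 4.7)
The plan is to exploit the coupling to show that both mass differences $D_t := |\tilde{\eta}_t^N| - |\eta_t^N|$ and $\hat{D}_t := |\tilde{\eta}_t^N| - |\hat{\eta}_t^N|$ are non-decreasing pure-jump counting processes whose instantaneous rate is deterministically bounded by an explicit $o(N)$ quantity; superexponential concentration at level $\epsilon N$ then follows from a one-line exponential Chebyshev estimate.

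First I would read off from the coupling which transitions can change $D_t$ and $\hat{D}_t$. Bulk moves preserve mass in each process and, by construction, use shared noise across the coupled processes, so they leave the differences unchanged. Hence $D_t$ jumps upward by $1$ exactly when $\tilde{\eta}^N$ experiences an influx event, and $\hat{D}_t$ jumps by $1$ either when $\tilde{\eta}^N$ experiences an influx not matched in $\hat{\eta}^N$ or when $\hat{\eta}^N$ experiences an outflux. Using the boundedness of $\alpha$ and $\beta$ together with the trivial bounds $1 - \eta(\cdot) \leq 1$ and $\eta(\cdot) \leq 1$, each of these jump rates is pointwise dominated by
\[
\lambda_N := \bigl(\|\alpha\|_\infty \vee \|\beta\|_\infty\bigr)\,\theta(N)\kappa(N) \sum_{x\in\Lambda_N,\;y\not\in\Lambda_N} \bigl(p(x,y) + p(y,x)\bigr),
\]
which is $o(N)$ by the hypothesis of \Cref{theo:expon_equiv}.

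Second, I would invoke the standard exponential martingale for counting processes: for any $\lambda > 0$, the process $\exp\bigl(\lambda D_t - \lambda_N(e^\lambda - 1)t\bigr)$ is dominated by a positive martingale of mean $1$ (its true random compensator is pointwise smaller), and Markov's inequality at $t = T$ gives
\[
\mathcal{Q}^N\{D_T \geq \epsilon N\} \leq \exp\bigl(-\lambda\epsilon N + T\lambda_N(e^\lambda - 1)\bigr).
\]
Since $\lambda_N / N \to 0$, this yields $\limsup_N \frac{1}{N}\log \mathcal{Q}^N\{D_T \geq \epsilon N\} \leq -\lambda\epsilon$ for every $\lambda > 0$, hence superexponential decay. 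The identical argument, with the same $\lambda_N$, handles $\hat{D}_T$.

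Essentially nothing is hard here once the coupling is in hand: the real content of the lemma has been absorbed into the structural claim that $D_t$ and $\hat{D}_t$ are monotone counting processes driven solely by the boundary generators. The only bookkeeping point I would double-check is that the coupled reservoir events genuinely contribute to the jump rates as stated, with no hidden mechanism producing simultaneous changes of $|\tilde{\eta}^N|$ and $|\eta^N|$ (respectively $|\hat{\eta}^N|$) that could inflate the jump size beyond $1$; this is immediate from the standard graphical representation of exclusion processes.
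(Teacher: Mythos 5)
Your proposal is correct and takes essentially the same route as the paper: dominate the mass difference by a counting process of boundary events with intensity $o(N)$, then apply an exponential Chebyshev (Poisson--Chernoff) bound --- your version with fixed $\lambda$ followed by $\lambda\to\infty$ is equivalent to the paper's explicit Poisson tail estimate. One shared imprecision (present in the paper's exposition as well): under the coupling, the unmatched influx events actually enter $\hat{\eta}^N$ at sites where $\tilde{\eta}^N$ is already occupied, so $\vert\tilde{\eta}^N_t\vert - \vert\hat{\eta}^N_t\vert$ need not be monotone; this is harmless, since its upward jumps still come only from outflux events, so your rate bound and supermartingale estimate stand.
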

\begin{proof}
As both events are analogous, we will concentrate on the first set only.
Since the change in mass can come only from the influx of particles, it suffices to prove that the probability of $\epsilon N$ particles entering up to time $T$ is superexponentially small.
Note that the number of particles is bounded from above by a Poisson number $P$ with parameter 
\[
\lambda_N := T\Vert \alpha\Vert_\infty\cdot\theta(N)\kappa(N)\sum_{x\in\Lambda_N, y\not\in\Lambda_N} p(y,x) = o(N).
\]
The usual Chernoff bound provides us with the estimate
\[
 	\mathbb{P}(P \geq \lambda_N + x) \leq \exp\left(- \dfrac{x^2}{ \lambda_N } \cdot h\left(\dfrac{x}{\lambda_N}\right)\right),
 	\]
 	where $h(u) = \frac{(1+u)\ln(1+u) - u}{u^2}$ vanishes at infinity like $\frac{\ln u}{u}$, see {e.g.}~\cite{Can19}.
In particular, we conclude that
\begin{align*}
\mathbb{P}\left(\exists t\in [0,T]\;:\; \vert \tilde{\eta}_t^N\vert - \vert \eta_t^N\vert \geq \epsilon N\right) \leq  \exp\left( - \Theta\left(\epsilon N \cdot \ln \left(\dfrac{\epsilon N}{o(N)}\right)\right)\right)
\end{align*}
which is superexponentially small.
\end{proof}

The remainder of this section is dedicated to presenting the core idea in the construction of the coupling used in the proof.
It is an adaptation of the misanthrope (or: attractive) coupling introduced in \cite{Coc85}, see also \cite[Section 9]{SS18} for an English version.
In the following, we will only treat the coupling $(\eta^N, \tilde{\eta}^N)$ as the other coupling is analogous.

Consider the generator
\begin{align*}
&\quad\;\overline{\mathcal{L}}^Nf(\xi,\zeta) \\
&= \kappa(N)\sum_{x,y\in\Lambda_N} p(x,y)\cdot \Big( \xi(x)\big(1 - \xi(y)\big) \wedge \zeta(x)\big(1- \zeta(y)\big)\Big) \cdot\Big( f(\xi^{x,y},\zeta^{x,y}) - f(\xi,\zeta)\Big)\\
&\quad + \kappa(N)\sum_{x,y\in \Lambda_N} p(x,y)\cdot \Big( \xi(x)\big(1 - \xi(y)\big) - \xi(x)\big(1 - \xi(y)\big) \wedge \zeta(x)\big(1- \zeta(y)\big)\Big) \\
&\hspace*{4cm}\cdot\Big( f(\xi^{x,y},\zeta) - f(\xi,\zeta)\Big)\\
&\quad + \kappa(N)\sum_{x,y\in \Lambda_N} p(x,y)\cdot \Big( \zeta(x)\big(1 - \zeta(y)\big) - \xi(x)\big(1 - \xi(y)\big) \wedge \zeta(x)\big(1- \zeta(y)\big)\Big) \\
&\hspace*{4cm}\cdot\Big( f(\xi,\zeta^{x,y}) - f(\xi,\zeta)\Big)\\
&\quad + \theta(N)\kappa(N)\sum_{x\not\in \Lambda_N}\sum_{y\in\Lambda_N} p(x,y)\alpha(x)\big(1 - \zeta(y)\big)\cdot \Big( f(\xi, \zeta^y) - f(\xi,\zeta)\Big),
\end{align*}
so that, whenever possible, particles jump together.
If we consider the corresponding Markov process $(\xi^N, \zeta^N)_N$, one clearly has that $\xi^N$ is a Markov process with generator $\kappa(N)\mathcal{L}^N_{bulk}$ and $\zeta^N$ is a Markov process with generator $\kappa(N)\mathcal{L}^N_{bulk} + \theta(N)\kappa(N)\mathcal{L}^N_{influx}$.

The test function $\mathds{1}_{\xi\leq \zeta}$ yields that
\begin{align*}
t\mapsto \mathds{1}_{\xi^N_t\leq \zeta^N_t} - \int_0^t \overline{\mathcal{L}}^N \mathds{1}_{\xi^N_s\leq \zeta^N_s}\;\mathrm{d}s
\end{align*}
is a martingale w.r.t.~its natural filtration.
Next, define the stopping time 
\[
\tau := \inf\{t\geq 0\;:\; \xi^N_t > \zeta^N_t\}.
\]
Since
\[
\mathbb{E}\left[\mathds{1}_{\xi^N_{t\wedge\tau}\leq \zeta^N_{t\wedge\tau}}\right] = 1 - \mathbb{P}(\tau \leq t),
\]
we obtain through the martingale property that
\begin{align*}
\mathbb{P}(\tau \leq t) &= \mathbb{P}(\xi^N_0 \textgreater \zeta^N_0) + \mathbb{E}\left[\int_0^{t\wedge\tau} \overline{\mathcal{L}}^N \mathds{1}_{\xi^N_s \leq \zeta^N_s}\;\mathrm{d}s\right].
\end{align*}
Since the integrand is zero on the set $\{t \textless \tau\}$, we conclude that if $\xi_0^N \leq \zeta_0^N$ a.s., then $\xi_t^N \leq \zeta_t^N$ for every $t\in [0,T]$ a.s.

\appendix

\section{Generalization to misanthrope processes}\label{asec:misanthrope}

The proof in \Cref{sec:proof} relies heavily on the attractive coupling of the different processes.
From \cite{Coc85}, it is known that this type of coupling can be constructed for the large class of \emph{misanthrope processes}.
These cover many models of interest, including the exclusion and zero range processes.

Let $k\in \mathbb{N}\cup \{\infty\}$ denote the maximal number of particles allowed at a site and set $S_k := \{0, \dots, k\}$ or $S_\infty := \mathbb{N}_0$ accordingly.

For $N\in\mathbb{N}$, $N\geq 2$, define the \emph{bulk} $\Lambda_N := \{1,\dots, N-1\}$ and the space of configurations $\Omega_N := S_k^{\Lambda_N}$. 
For a configuration $\eta\in\Omega_N$ and sites $x,y\in\Lambda_N$, we define the three actions $\eta\mapsto\eta^{x\rightarrow y}$, $\eta\mapsto \eta^{x\uparrow}$ and $\eta\mapsto \eta^{x\downarrow}$ as follows:
\begin{enumerate}
	\item if $\eta(x) = 0$ or $\eta(y) = k$, set $\eta^{x\rightarrow y} := \eta$, otherwise set $\eta^{x\rightarrow y} := \eta - \delta_x + \delta_y$, {i.e.}
	\[
	\eta^{x\rightarrow y}(z) := \begin{cases}
		\eta(x) - 1 & \text{ if }z = x\\
		\eta(y) + 1 & \text{ if }z = y\\
		\eta(z) & \text{ otherwise}
	\end{cases};
	\]
	\item if $\eta(x) = k$, set $\eta^{x\uparrow} := \eta$, otherwise set $\eta^{x\uparrow} := \eta + \delta_x$;
	\item if $\eta(x) = 0$, set $\eta^{x\downarrow} := \eta$, otherwise set $\eta^{x\downarrow} := \eta - \delta_x$.
\end{enumerate}
For functions $f:\Omega_N\rightarrow\mathbb{R}$, define the generators
\[
\mathcal{L}_{\text{bulk}}^N f(\eta) := \sum_{x,y\in\Lambda_N}^{N-1} p(x,y)\cdot b_{\text{bulk}}\big(\eta(x), \eta(y)\big)\cdot \Big( f(\eta^{x,y}) - f(\eta)\Big)
\]
and
\begin{align*}
\mathcal{L}_{\text{influx}}^N f(\eta) &:= \sum_{x\not\in\Lambda_N}\sum_{y\in\Lambda_N} p(x,y)\cdot b_{\text{influx}}\big(x, \eta(y)\big)\cdot\Big( f(\eta^{y\uparrow})- f(\eta)\Big),\\
\mathcal{L}_{\text{outflux}}^N f(\eta) &:= \sum_{x\in\Lambda_N} \sum_{y\not\in\Lambda_N} p(x,y)\cdot b_{\text{outflux}}\big(\eta(x), y\big) \cdot \Big( f(\eta^{x\downarrow}) - f(\eta)\Big),
\end{align*}
where
\begin{enumerate}[label=\roman*)]
	\item $p$ is a jump kernel,
	\item $b_{\text{bulk}}:S_k^2\rightarrow [0,+\infty)$ is non-decreasing in its first variable, non-increasing in its second variable and  satisfies $b(n,m) = 0$ if and only if $n = 0$ or $m=k$,
	\item $b_{\text{influx}}:\mathbb{Z}\times S_k\rightarrow [0,+\infty)$ and $b_{\text{outflux}}:S_k\times \mathbb{Z}\rightarrow [0,+\infty)$ are bounded.
\end{enumerate} 

We will assume that $b_{\text{bulk}}$ is such that the following Markov processes exist. This is trivially satisfied when $b_{\text{bulk}}$ is bounded, {e.g.}~if $k \neq \infty$.

\begin{defin}
A \emph{misanthrope process in contact with impermeable boundaries} is defined through the generator $\mathcal{L}_{\textnormal{bulk}}^N$, whereas a \emph{misanthrope process in contact with weak reservoirs} has the generator
\[
\mathcal{L}^N := \mathcal{L}_{\textnormal{bulk}}^N + \theta(N)\left( \mathcal{L}_{\textnormal{influx}}^N + \mathcal{L}_{\textnormal{outflux}}^N\right)
\]
for some $\theta(N) = o(1)$.
\end{defin}

As before, we will identify processes with values in the space of configurations $\Omega_N$ with the corresponding process with values in the space of measures via the map $\pi^N$, cf.~\Cref{eq:empirical_map}.

\begin{theo}\label{theo:general_exp_equiv}
If both processes are sped up by a factor $\kappa(N)$, then the misanthrope process in contact with weak reservoirs is exponentially equivalent to the misanthrope process in contact with impermeable boundaries provided
\[
\kappa(N)\theta(N)\sum_{x\in\Lambda_N, y\not\in\Lambda_N} \Big(p(x,y) + p(y,x)\Big) = o(N).
\]
\end{theo}
\begin{proof}
The proof is exactly as in \Cref{sec:proof}. 
The only difference is that we replace $\Vert \alpha\Vert_\infty$ by $\Vert b_{\text{influx}}\Vert_\infty$ and similarly for $\beta$ and $b_{\text{outflux}}$.
\end{proof}

Note that the proof does not depend on the underlying space. 
In particular, the result can be extended to misanthrope processes on any lattice, including $\mathbb{Z}^d$.
Furthermore, it can be generalized to time-dependent interactions with the reservoirs as long as they are locally $L^1$ in time in the sense that $b_{\text{influx}}\in L^1_{loc}([0,+\infty); L^\infty( \mathbb{Z}\times S_k))$ and similarly for $b_{\text{outflux}}$.
In this case $T\Vert b_{\text{influx}}\Vert_\infty$ is to be replaced by $\int_0^T \Vert b_{\text{influx}}(t)\Vert_\infty \dy t$.

Although pathological counter-examples can be constructed, the statement of \Cref{theo:general_exp_equiv} is sharp in most situations.
This includes also the symmetric case, see e.g.~\cite{BMNS17,BCGS22} for the treatment of the symmetric exclusion process with nearest-neighbour and long jumps, respectively, in contact with weak reservoirs.
It is generally equally hard to derive the hydrodynamic behaviour of the process in contact with weak reservoirs or in contact with impermeable boundaries, so that the result only slightly shortens proofs by providing a general argument for why boundary terms may be ignored.

In the context of totally asymmetric processes, however, \Cref{theo:general_exp_equiv} provides a shortcut for proving the hydrodynamic behaviour (and even higher order behaviour as the fluctuations or the large deviations) as shown in \Cref{sec:applications}.

\section{Proof of \Cref{lem:compactly_supported}}\label{asec:extension_proof}

For simplicity, assume that $\rho_0(x) = 0$ for $x \leq 0$ and $\rho_0(x) = 1$ for $x\geq 1$.
Let $(\rho_0^{m,\ell})_{m,\ell\in\mathbb{N}}$ be a family of continuous functions satisfying $\rho_0^{m,\ell}\vert_{[-m,\ell]} = \rho_0\vert_{[-m,\ell]}$ and $\rho_0^{m,\ell}(x) = \frac{1}{2}$ on $\mathbb{R}\setminus [-2m,2\ell]$.
We may choose the family such that it is pointwise non increasing in $m$ and pointwise non decreasing in $\ell$.
Write $\mu_0^{N, m,\ell}$ for the corresponding product measure on $\{0,1\}^\mathbb{Z}$. Furthermore, denote by $\mu_0^{N, \ell}$ the measures obtained from the pointwise limit $\lim_m \rho_0^{m, \ell}$ which vanishes to the left of $0$.

Using the attractive coupling, we may construct the TALJEPs $\eta^{N, m,\ell}$ and $\eta^{N, \ell}$ on $\mathbb{Z}$ started from $\mu_0^{N,m,\ell}$ and $\mu_0^{N,\ell}$ on a common probability space such that
\[
\eta_t^{N, \ell} \leq \eta_t^{N, m, \ell} \leq \eta_t^{N, m', \ell}\qquad\text{ and }\qquad \eta_t^{N,\ell} \leq \eta_t^{N, \ell'} \leq \eta_t^N,
\] 
for any $m' \leq m$, $\ell \leq \ell'$ and $t\geq 0$, a.s.

Similarly to the proof of \Cref{theo:expon_equiv}, it is enough to show that both
\[
\lim_{m}\sup_\ell \limsup_N \mathbb{P}^N\left( \sum_{x\geq 1} \eta_T^{N, m, \ell}(x) - \eta_T^{N,\ell}(x) > \epsilon N\right) = 0
\]
and
\[
\lim_\ell \limsup_N \mathbb{P}^N\left( \sum_{x=1}^{N-1} \eta_T^N(x) - \eta_T^{N,\ell}(x) > \epsilon N\right) = 0.
\]

As both quantities are similar, we will concentrate on the former.
By construction, $\sum_{x\geq 1} \eta_t^{N,\ell}(x) = \sum_{x\geq 1} \eta_0^{N,\ell}(x) = \sum_{x\geq 1}\eta_0^{N,m,\ell}(x)$.
In particular, it is enough to bound
\begin{equation}\label{eq:final_equation}
\mathbb{P}^N\left( \sum_{x\geq 1} \eta_T^{N,m,\ell}(x) - \eta_0^{N,m,\ell}(x) > \epsilon N\right)\leq \dfrac{1}{\epsilon N}\mathbb{E}^N\left[\sum_{x\geq 1} \eta_T^{N,m,\ell}(x) - \eta_0^{N,m,\ell}(x)\right].
\end{equation}
Since the exclusion restraint slows down the motion of particles, we can bound this quantity by studying the dynamics of independent random walkers with jump kernel $p$.
Write $X_t = \sum_{i=1}^{{\mathcal{N}}_t} \xi_i$ for a random walker with jump kernel $p$ starting at the origin, where ${\mathcal{N}}$ is a Poisson process with rate given by the total jump rate and the $\xi_i$ are iid drawn from the jump distribution, independently of ${\mathcal{N}}$.
Then, for $\gamma' := \frac{\gamma - 1}{2} > 0$, Jensen's inequality yields
\begin{align*}
\mathbb{E}\left[ X_T^{1+\gamma'}\right] &= \mathbb{E}\left[{\mathcal{N}}_T^{1+\gamma'}\left(\dfrac{1}{{\mathcal{N}}_T}\sum_{i=1}^{{\mathcal{N}}_T} \xi_i\right)^{1+\gamma'}\right]\\
&\leq \mathbb{E}\left[ {\mathcal{N}}_T^{\gamma'} \sum_{i=1}^{{\mathcal{N}}_T} \xi_i^{1+\gamma'}\right]\\
&= \mathbb{E}\left[ {\mathcal{N}}_T^{1+\gamma'} \right]\cdot \mathbb{E}[\xi_1^{1+\gamma'}] \leq N^{1+\gamma'} C_{\gamma,T},
\end{align*}
where we used that $N_T$ is the sum of $N$ independent Poisson random variable with parameter depending only on $\gamma$ and $T$.
The extended Markov inequality yields
\[
P_z := \mathbb{P}(X_t \geq z) \leq C_{\gamma,T}\dfrac{N^{1+\gamma'}}{z^{1+\gamma'}}.
\]

Next, we note that, when starting independent random walkers, one at each site left of $-mN$, the number of particles that arrive at sites right of the origin equals the sum of independent Bernoulli trials with parameter $P_{-z}$, $z\leq -mN$.
We conclude that the expected number of particles can be bounded by a constant times 
\[
N^{1+\gamma'}\sum_{z=mN}^{\infty} z^{-1-\gamma'} = N\cdot \dfrac{1}{N}\sum_{z\geq mN} \left(\dfrac{z}{N}\right)^{-1-\gamma'} \lesssim N\int_{m}^{\infty} u^{-1-\gamma'}\;\mathrm{d}u \lesssim Nm^{-\gamma'}.
\]
Together with \eqref{eq:final_equation} and letting $m\to \infty$ after $N\to \infty$, this concludes the proof.

\printbibliography

\end{document}